\documentclass[final]{algoritmy}
\usepackage{amsmath,amsfonts,amssymb,latexsym,epic,eepic}
\usepackage[dvips]{graphics,epsfig,color}
\usepackage{pstricks}

\newtheorem{remark}{Remark}
%
% bold symbols:
\newcommand{\bff}{{\boldsymbol f}}
\newcommand{\bfn}{{\boldsymbol n}}
\newcommand{\bfu}{{\boldsymbol u}}
\newcommand{\bfv}{{\boldsymbol v}}
\newcommand{\bfw}{{\boldsymbol w}}
\newcommand{\bfx}{{\boldsymbol x}}
\newcommand{\bfvphi}{{\boldsymbol \varphi}}
%
% operators:
\newcommand{\gradi}{{\boldsymbol \nabla}}
\newcommand{\dive}{{\rm div}}
%
% spaces:
\newcommand{\xR}{\mathbb{R}}
\newcommand{\xdisc}{{\cal X}_\disc}
\newcommand{\xdiscec}{{\cal X}_{\disc}^{\hspace{.1em}\ec}}
\newcommand{\xLtwo}{{\rm L}^{2}}
\newcommand{\xHone}{{\rm H}^{1}}
\newcommand{\xCzero}{{\rm C}^{0}}
%
% norms and semi-norms:
\newcommand{\norm}[2]{\hspace{.2em}|\hspace{-.1em}| #1 |\hspace{-.1em}|_{#2}\hspace{.2em}}
\newcommand{\normsq}[2]{\hspace{.2em}|\hspace{-.1em}| #1 |\hspace{-.1em}|_{#2}^2\hspace{.2em}}
\newcommand{\snorm}[2]{\hspace{.2em}| #1 |_{#2}\hspace{.2em}}
\newcommand{\snormsq}[2]{\hspace{.2em}| #1 |_{#2}^2\hspace{.2em}}
\newcommand{\ec}{{\scalebox{0.6}{$\square$}}}
\newcommand{\ic}{{\scalebox{0.6}{$+$}}}
%
% mesh:
\newcommand{\edge}{\sigma}
\newcommand{\edges}{{\mathcal E}}
\newcommand{\edgesint}{{\mathcal E}_{{\rm int}}}
\newcommand{\edgesext}{{\mathcal E}_{{\rm ext}}}
\newcommand{\edgesec}{{\mathcal E}_{{\rm int}}^{\hspace{.1em}\ec}}
\newcommand{\edgesic}{{\mathcal E}_{{\rm int}}^{\hspace{.1em}\ic}}
\newcommand{\mesh}{{\mathcal T}}
\newcommand{\NN}{{\mathcal N}}
\newcommand{\GG}{{\mathcal G}}
\newcommand{\disc}{{\raisebox{-0.1em}{\scalebox{0.6}{$\mesh$}}}}
%
% misc:
\newcommand{\dx}{\, {\rm d}\bfx}
\newcommand{\dedge}{\, {\rm d}\sigma}
\newcommand{\proj}{r_{\hspace{-0.1em} \disc}}
\newcommand{\ie}{{\em i.e.}}

\title{A class of collocated finite volume schemes for incompressible flow problems}

\author{R. Eymard \thanks{Universit\'e de Marne-la-Vall\'ee, France ({\tt eymard@univ-mlv.fr})}
   \and R. Herbin \thanks{Universit\'e de Provence, France ({\tt herbin@latp.univ-mrs.fr})}
   \and J.-C. Latch\'e
   \and B. Piar \thanks{Institut de Radioprotection et de S\^uret\'e Nucl\'eaire (IRSN), France 
                           ({\tt [jean-claude.latche, bruno.piar]@irsn.fr})}
}

\begin{document}
\AlgLogo{1}{10}
\maketitle

\begin{abstract}
In this paper, we present a class of finite volume schemes for incompressible flow problems.
The unknowns are collocated at the center of the control volumes, and the stability of the schemes is obtained by adding to the mass balance stabilization terms involving the pressure jumps across the edges of the mesh.
\end{abstract}

\begin{keywords} Incompressible flows, Stokes problem, Navier-Stokes equations, Finite Volumes \end{keywords}

\begin{AMS} 35Q30, 65M12, 76D05, 76D07, 76M12 \end{AMS}
%
% AMS 2000
% 35-xx : partial differential equations
% 35-Q30 : Stokes et Navier-Stokes equations
%
% 65-xx : numerical analysis
% 65M12 : stability and convergence of numerical methods
% 65M60 : finite elements,... finite methods
%
% 76-xx : fluid mechanics
% 76D05 : Navier-Stokes equations
% 76D07 : Stokes and related flows
% 76M10 : Finite Element methods
% 76M12 : Finite Volume Methods

\pagestyle{myheadings}
\thispagestyle{plain}
\markboth{Eymard {\em et al}}{Collocated FVs for incompressible flows}

\section{Introduction}

The use of collocated finite volumes for fluid flow problems is appealing for several reasons.
Among them, let us mention a very inexpensive assembling step (in particular compared to finite elements, because there is no numerical integration to perform) and the possibility to use, at least to some extent, general unstructured meshes with a low complexity of the data structure (compared with staggered schemes) suitable for the implementation of adaptative mesh refinement strategies.
These features make collocated finite volumes attractive for industrial problems, and they are widely used in Computational Fluid Dynamics, either in proprietary or in commercial (FLUENT, CFX, \dots) codes.

\medskip
However, when applied to incompressible flow problems, cell-centered collocated finite volumes suffer from a lack of coercivity, which is usually handled by a stabilization technique initially proposed by Rhie and Chow \cite{rhi-83-num}, and further developed in subsequent works.
We present here an alternative strategy, based on the addition of "pressure-laplacian-like" stabilization terms in the mass balance (\ie\ the continuity constraint) equation.
In contrast to the case of stabilizations {\it \`a la} Rhie and Chow, for this class of schemes, we are able to prove the stability and convergence of most variants for steady or evolution Stokes or Navier-Stokes equations; optimal (\ie\ first order in energy norms) error bounds are also provided for the Stokes problem \cite{eym-06-sta,eym-07-conv,eym-08-conv}.
Numerical tests for these schemes for a variety of flow problems can be found in \cite{che-06-num,eym-07-new,eym-07-sta, che-08-col}.

\medskip
In this paper, we restrict the exposition to the stationary Stokes problem:
\begin{subequations}
\begin{align} &
-\Delta \bfu + \gradi p = \bff && \hspace{-20ex} \textrm{ in } \Omega, \hspace{20ex}
\label{qdm} \\ &
\dive (\bfu) = 0 \;  && \hspace{-20ex} \textrm{ in } \Omega,
\label{mass} \\ & \displaystyle
\bfu = 0 && \hspace{-20ex} \textrm{ on } \partial \Omega,
\label{BC}\end{align}\label{pbcont_s}\end{subequations}
where $\Omega$ is a polygonal domain of $\xR^2$, $\partial \Omega$ is the boundary of $\Omega$, $\bfu$ stands for the velocity, $p$ for the pressure, the mean value of which is supposed to be zero, and $\bff$ is a forcing term.

\medskip
The stability of the Stokes problem may be readily proved in two steps.
First, multiplying Equation \eqref{qdm} by the unknown $\bfu$ and integrating over $\Omega$ yields:
\[
\int_\Omega |\gradi \bfu|^2 \dx + \int_\Omega \gradi p \cdot \bfu \dx=\int_\Omega \bff \cdot \bfu \dx.
\]
Integrating by parts and using the boundary condition \eqref{BC}, we get:
\begin{equation}
\int_\Omega \gradi p \cdot \bfu \dx=-\int_\Omega p \, \dive \bfu \dx,
\label{div-grad}\end{equation}
so this therm vanishes by \eqref{mass}, and we obtain a control of the velocity $\bfu$ in $\xHone(\Omega)^2$ provided that the forcing term $\bff$ be regular enough, say $\bff \in \xLtwo(\Omega)^2$, which is more than needed in the continuous case but will make the finite volume scheme easier to write.
To obtain a control on the pressure, we use a classical result which is a consequence of a lemma due to Ne\v{c}as:
\begin{equation}
\forall q \in \xLtwo_0(\Omega), \mbox{ there exists } \bfv \in \xHone_0(\Omega)^d \mbox{ such that }
\left| \begin{array}{l}
\dive \bfv = q \mbox{ a.e. in } \Omega,
\\[1ex]
\norm{\bfv}{\xHone(\Omega)^d} \leq c \norm{q}{\xLtwo(\Omega)},
\end{array}\right.
\label{necas}\end{equation}
where $\xLtwo_0(\Omega)$ stands for the subspace of $\xLtwo(\Omega)$ of zero mean value functions and the real number $c$ only depends on $\Omega$.
Choosing $\bfv$ satisfying this relation for $q=p$, multiplying \eqref{qdm} by $\bfv$ and using the estimate for $\bfu$ yields a bound for $\norm{p}{\xLtwo(\Omega)}$.

\medskip
From this computation, we conclude that the stability of the Stokes problem stems from three basic arguments: $(i)$ the coercivity of the diffusion operator, $(ii)$ the duality of the $\gradi$ and $\dive$ operators with respect to the $\xLtwo$ inner-product, $(iii)$ the stability of the gradient operator.
In this paper, we show how to build collocated finite volume schemes satisfying $(i)$ and $(ii)$, and how to circumvent the fact that the property $(iii)$ is not satisfied.
The presentation is organized as follows.
In a first part, we derive the different variants of the proposed schemes for a model problem, namely choosing for the computational domain $\Omega$ the unit square and for the mesh a uniform grid.
In a second part, we briefly discuss how to extend these schemes to general domains and meshes.

% ---------------------------------------------------------------------------------------------------------------------------------

\section{A model problem: solving the Stokes system with structured two-dimensional grids}

In this section, we restrict the presentation to the solution of Problem \eqref{pbcont_s} with $\Omega=(0,1)\times(0,1)$ using a structured uniform grid, as sketched on Figure \ref{fig:mesh_and_checkerboard}.
We first describe the discretization, then we present the possible schemes and discuss their stability features.

\subsection{Discrete spaces}

We suppose given a uniform structured mesh $\mesh$ (with step $h$) of $\Omega$, and denote $\edges$ (resp. $\edgesint$, $\edgesext$) the set of edges (resp. internal edges, external edges) of the mesh.
For any two neighbouring control volumes $K$ and $L$ of $\mesh$, we denote by $K|L$ the common edge of $K$ and $L$, and by $\bfn_{K|L}$ the normal vector to $K|L$ oriented from $K$ to $L$ (so $\bfn_{K|L}=-\bfn_{L|K}$).
If $\edge \in \edgesext$, $\bfn_\edge$ stands for the normal vector to $\edge$ outward $\Omega$.
For any control volume $K\in\mesh$, we denote by $\edges(K)$ the set of edges of $K$.
Let $\xdisc \subset \xLtwo(\Omega)$ be the set of functions which are piecewise constant over each control volume.
For any $v \in \xdisc$ and $K \in \mesh$, we denote by $v_K$ the value of $v$ over $K$.
We define a discrete inner product for the functions of $\xdisc$ as follows:
\begin{equation}
\begin{array}{l}
\forall v \in \xdisc,\ \forall w \in \xdisc,
\\[1ex] \displaystyle \hspace{10ex}
(v,w)_\disc=
\hspace{-2ex} \sum_{\begin{array}{c}\scriptstyle \edge \in \edgesint \\[-1ex] \scriptstyle (\edge=K|L) \end{array}} (v_K-v_L)\,(w_K-w_L)
+ \hspace{-2ex} \sum_{\begin{array}{c}\scriptstyle \edge \in \edgesext \\[-1ex] \scriptstyle (\edge \in \edges(K)) \end{array}} 2\, v_K \, w_K.
\end{array}
\label{disc_prod}\end{equation}
This inner product is associated to the norm defined by $\normsq{v}{\disc}=(v,v)_\disc$, $\forall v \in \xdisc$.
This inner product and this norm plays at the discrete level the same role as (and, to some extent, are consistent with) the $\xHone$ inner product and norm in the continuous case; they will be referred to hereafter as the discrete $\xHone$ inner product and norm.
The discrete $\xHone$ norm is known to control the $\xLtwo$ norm \cite{eym-00-fin} (\ie\ there exists a real number $c$ only depending on $\Omega$ and not on the mesh such that the following discrete Poincar\'e relation holds: $\forall v \in \xdisc,\ \norm{v}{\xLtwo(\Omega)}\leq c \norm{v}{\disc}$).
These definitions naturally extend to vector-valued functions by, $\forall \bfv=(\bfv^{(1)},\bfv^{(2)})$ and $\bfw=(\bfw^{(1)},\bfw^{(2)}) \in \xdisc^2$, $(\bfv,\bfw)_\disc=(\bfv^{(1)},\bfw^{(1)})_\disc+(\bfv^{(2)},\bfw^{(2)})_\disc$ and $ \normsq{\bfv}{\disc} = \normsq{\bfv^{(1)}}{\disc} + \normsq{\bfv^{(2)}}{\disc}$.
The following inner product and seminorm will also be used hereafter:
\begin{equation}
\forall p \in \xdisc,\ \forall q \in \xdisc, \quad
[p,q]_\disc=
\hspace{-2ex} \sum_{\begin{array}{c}\scriptstyle \edge \in \edgesint \\[-1ex] \scriptstyle (\edge=K|L) \end{array}} (p_K-p_L)\,(q_K-q_L),
\quad
\snormsq{q}{\disc}=[q,q]_\disc.
\label{disc_snorm}\end{equation}

% ----------------------------------------------------------------------------------------------------------------------------------

\subsection{The natural scheme}
Integrating the relations of \eqref{pbcont_s} over each control volume $K$ of the mesh yields:
\[
\begin{array}{l} \displaystyle
\int_{\partial K} -\gradi \bfu \cdot \bfn_{\partial K} \dedge + \int_{\partial K} p\, \bfn_{\partial K} \dedge = \int_K \bff \dx,
\\[2ex] \displaystyle
\int_{\partial K} \bfu \cdot \bfn_{\partial K} \dedge =0,
\end{array}
\]
where $\partial K$ stands for the boundary of $K$ and $\bfn_{\partial K}$ for the normal vector to $\partial K$ outward $K$.
The natural scheme for the solution of Problem \eqref{pbcont_s} thus consists in searching $(\bfu,p)\in \xdisc^2 \times \bar \xdisc$ such that, $\forall K \in \mesh$:
\begin{subequations}
\begin{align} & \displaystyle
(-\Delta_\disc \bfu)_K + (\gradi_\disc p)_K = \bff_K,
\label{nat_qdm} \\[1ex] & \displaystyle
(\dive_\disc \bfu)_K=0,
\label{nat_mass} \end{align}\label{nat_scheme}\end{subequations}
where $\bar \xdisc$ stands for the space of functions of $\xdisc$ with zero mean value, $\bff_K$ is the mean value of $\bff$ over $K$ and
\begin{subequations}
\begin{align} & \displaystyle
(-\Delta_\disc \bfu)_K = \frac 1 {h^2} \sum_{\edge=K|L} (\bfu_K -\bfu_L) + \frac 2 {h^2} \sum_{\edge \in \edges(K)\cap \edgesext} \bfu_K,
\label{disc_lapl} \\ & \displaystyle
(\gradi_\disc p)_K = \frac 1 {h^2} \sum_{\edge=K|L} h\, \frac{p_L+p_K} 2\,\bfn_\edge + \frac 1 {h^2} \sum_{\edge \in \edges(K)\cap \edgesext} h \, p_K \,\bfn_\edge,
\label{disc_grad} \\ & \displaystyle
(\dive_\disc \bfu)_K=\frac 1 {h^2} \sum_{\edge=K|L}  h \, \frac{\bfu_K +\bfu_L} 2 \cdot \bfn_\edge.
\label{disc div} \end{align}\label{disc_ops}\end{subequations}
Since $\forall K \in \mesh$, $\sum_{\edge \in \edges(K)} h \, \bfn_\edge=0$, we have $2\, h\, (\gradi_\disc p)_K = \sum_{\edge=K|L}  (p_L-p_K)\ \bfn_\edge$, and thus, reordering the summations, we get for any pressure $q \in \xdisc$ and velocity $\bfv \in (\xdisc)^2$:
\[
\begin{array}{l} \displaystyle
\int_\Omega \gradi_\disc q \cdot \bfv \dx= \sum_{K\in\mesh} \bfv_K \cdot
\left[ \sum_{\edge=K|L}  h \, \frac{q_L-q_K} 2\,\bfn_\edge \right]=
\\ \hspace{25ex} \displaystyle
-\sum_{K\in\mesh} q_K \left[\sum_{\edge=K|L}  h \, \frac{\bfv_K +\bfv_L} 2 \cdot \bfn_\edge\right]
=-\int_\Omega q\ \dive_\disc \bfv \dx,
\end{array}
\]
which shows that the discrete gradient and divergence operators are transposed operators with respect to the $\xLtwo$ inner product, \ie\ that the stability property $(ii)$ indeed is verified by this scheme.
We then remark that, reordering the summations:
\[
\int_\Omega -\Delta_\disc \bfu \cdot \bfu \dx =\normsq{\bfu}{\disc},
\]
which shows that a discrete equivalent of property $(i)$ is also verified by the scheme.
Mimicking the computation in the continuous case, \ie\ multiplying \eqref{nat_qdm} by $\bfu_K$, reordering the summation, using \eqref{nat_mass} and the discrete Poincar\'e estimate, we thus get a bound for $\bfu$ in the discrete $\xHone$ norm.
To show the stability of the scheme in natural energy norms, the next step would be to control the $\xLtwo$ norm of the pressure through its gradient; unfortunately, the following result shows that it is not possible, at least not uniformly with respect to $h$.

\begin{figure}[tb]
\newgray{graylight}{.97} \newgray{graymid}{.88}
\psset{unit=1cm}
\begin{center}
\begin{pspicture}(0,0)(8.5,4.3)
\rput[bl](0,-0.5){
%
%  control volumes and cluster
   \pspolygon*[linecolor=graylight](1,1)(2,1)(2,2)(1,2)
   \pspolygon*[linecolor=graymid](2,1)(3,1)(3,2)(2,2)
   \pspolygon*[linecolor=graylight](3,1)(4,1)(4,2)(3,2)
   \pspolygon*[linecolor=graymid](1,2)(2,2)(2,3)(1,3)
   \pspolygon*[linecolor=graylight](2,2)(3,2)(3,3)(2,3)
   \pspolygon*[linecolor=graymid](3,2)(4,2)(4,3)(3,3)
   \pspolygon*[linecolor=graylight](1,3)(2,3)(2,4)(1,4)
   \pspolygon*[linecolor=graymid](2,3)(3,3)(3,4)(2,4)
   \pspolygon*[linecolor=graylight](3,3)(4,3)(4,4)(3,4)
%
%  edges
   \psline[linecolor=black, linewidth=0.5pt, linestyle=solid]{-}(0.5,1)(4.5,1)
   \psline[linecolor=black, linewidth=0.5pt, linestyle=solid]{-}(0.5,3)(4.5,3)
   \psline[linecolor=black, linewidth=0.5pt, linestyle=solid]{-}(1,0.5)(1,4.5)
   \psline[linecolor=black, linewidth=0.5pt, linestyle=solid]{-}(3,0.5)(3,4.5)
   \psline[linecolor=black, linewidth=0.5pt, linestyle=solid]{-}(0.5,2)(4.5,2)
   \psline[linecolor=black, linewidth=0.5pt, linestyle=solid]{-}(0.5,4)(4.5,4)
   \psline[linecolor=black, linewidth=0.5pt, linestyle=solid]{-}(2,0.5)(2,4.5)
   \psline[linecolor=black, linewidth=0.5pt, linestyle=solid]{-}(4,0.5)(4,4.5)
%
% Mesh step:
   \psline[linecolor=black, linewidth=0.3pt, linestyle=dashed]{<->}(1,4.6)(2,4.6) \rput[bl](1.4,4.8){$h$}
   \psline[linecolor=black, linewidth=0.3pt, linestyle=dashed]{<->}(0.4,3.)(0.4,4.) \rput[bl](0.,3.3){$h$}
%
% legend:
   \pspolygon*[linecolor=graylight](5.5,2)(6.,2)(6.,2.5)(5.5,2.5)
   \psline[linecolor=black, linewidth=0.5pt, linestyle=solid]{-}(5.5,2)(6.,2)(6.,2.5)(5.5,2.5)(5.5,2)
   \rput[bl](6.2,2.1){$p_{\rm cb}=1$}
   \pspolygon*[linecolor=graymid](5.5,3)(6.,3)(6.,3.5)(5.5,3.5)
   \psline[linecolor=black, linewidth=0.5pt, linestyle=solid]{-}(5.5,3)(6.,3)(6.,3.5)(5.5,3.5)(5.5,3)
   \rput[bl](6.2,3.1){$p_{\rm cb}=-1$}
}
\end{pspicture}
\end{center}
\caption{Mesh and checkerboard pressure field.}
\label{fig:mesh_and_checkerboard}
\end{figure}
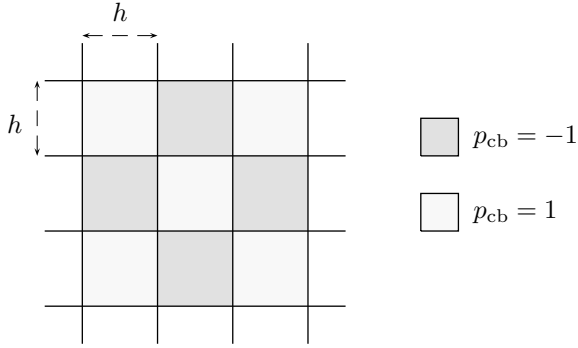

\medskip
\begin{lemma}
We associate to each $K\in\mesh$ its row number $i$ and column number $j$, and define the particular pressure field $p_{\rm cb}$ by $(p_{\rm cb})_K=(-1)^{i+j}$ (see Figure \ref{fig:mesh_and_checkerboard}).
Then the following estimate holds with a real number $c$ independent of $h$:
\[
\forall \bfv \in \xdisc^2, \quad \int_\Omega \gradi_\disc \, p_{\rm cb} \cdot \bfv \dx \leq c\, h \norm{\bfv}{\disc}.
\]
\end{lemma}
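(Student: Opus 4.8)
The plan is to compute $\gradi_\disc p_{\rm cb}$ explicitly and to show that it is supported only on the control volumes touching $\partial\Omega$. Using the reduced form $2h\,(\gradi_\disc q)_K = \sum_{\edge=K|L}(q_L-q_K)\,\bfn_\edge$ obtained above, I first note that any two neighbouring control volumes have index sums differing by one, so $(p_{\rm cb})_L = -(p_{\rm cb})_K$ and hence $(p_{\rm cb})_L-(p_{\rm cb})_K = -2\,(p_{\rm cb})_K$ on every internal edge. For a control volume $K$ not adjacent to the boundary all four edges are internal, so $\sum_{\edge=K|L}\bfn_\edge = 0$ and the gradient telescopes to $(\gradi_\disc p_{\rm cb})_K = -\frac{(p_{\rm cb})_K}{h}\sum_{\edge=K|L}\bfn_\edge = 0$. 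Thus $\gradi_\disc p_{\rm cb}$ vanishes on every interior control volume, which is the source of the decay.

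For a control volume $K$ adjacent to $\partial\Omega$ the same computation survives only through the external edges: setting $\bfn_K^{\rm ext} = \sum_{\edge\in\edges(K)\cap\edgesext}\bfn_\edge$ and using $\sum_{\edge\in\edges(K)}\bfn_\edge = 0$, I get $\sum_{\edge=K|L}\bfn_\edge = -\bfn_K^{\rm ext}$, whence $(\gradi_\disc p_{\rm cb})_K = \frac{(p_{\rm cb})_K}{h}\,\bfn_K^{\rm ext}$, a quantity of size $O(1/h)$ directed along the outward normal. Inserting this into $\int_\Omega\gradi_\disc p_{\rm cb}\cdot\bfv\dx = \sum_{K\in\mesh}h^2\,(\gradi_\disc p_{\rm cb})_K\cdot\bfv_K$, every interior term drops out and one explicit power of $h$ is left in front of a pure boundary sum:
\[
\int_\Omega\gradi_\disc p_{\rm cb}\cdot\bfv\dx = h\!\!\sum_{\begin{array}{c}\scriptstyle \edge \in \edgesext \\[-1ex] \scriptstyle (\edge \in \edges(K)) \end{array}}\!\!(p_{\rm cb})_K\,\bfv_K\cdot\bfn_\edge .
\]
It then remains to bound this boundary functional by $c\,\norm{\bfv}{\disc}$, uniformly in $h$.

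The hard part is this last step, and it is where the full first-order bound is won or lost. The first ingredient is the boundary penalty $\sum_{\edge\in\edgesext}2\,\bfv_K^2$ contained in $\normsq{\bfv}{\disc}$, which directly controls the normal traces $\bfv_K\cdot\bfn_\edge$ entering the sum. The second, decisive ingredient is the sign alternation of $(p_{\rm cb})_K$ along each side of $\partial\Omega$: I would perform a discrete summation by parts tangentially along the boundary, so that the alternating weights are traded for the tangential differences $\bfv_K-\bfv_{K'}$ of consecutive boundary control volumes, which are in turn controlled by the internal-edge part $\sum_{\edge=K|L}(\bfv_K-\bfv_L)^2$ of $\normsq{\bfv}{\disc}$. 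Summing the four sides separately (the left and right columns pairing with the first component of $\bfv$, the top and bottom rows with the second) and disposing of the four corner control volumes as genuine $O(h)$ remainders, one assembles the estimate $\int_\Omega\gradi_\disc p_{\rm cb}\cdot\bfv\dx\le c\,h\,\norm{\bfv}{\disc}$. The delicate bookkeeping in this boundary summation by parts — matching the alternating weights against both the penalty and the tangential-difference parts of the norm without losing a power of $h$ — is the main obstacle I expect to encounter.
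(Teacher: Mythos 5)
Your first half is correct and coincides with the paper's own computation: since $(p_{\rm cb})_K+(p_{\rm cb})_L=0$ across every internal edge, the internal contributions to the discrete gradient cancel and the integral collapses to the boundary sum $\int_\Omega \gradi_\disc\, p_{\rm cb}\cdot\bfv \dx = h\sum (p_{\rm cb})_K\,\bfn_\edge\cdot\bfv_K$, the sum running over $\edge\in\edgesext$ with $\edge\in\edges(K)$; whether one gets there from $p_K+p_L=0$ (as the paper does) or from your antisymmetric form $2h(\gradi_\disc q)_K=\sum_{\edge=K|L}(q_L-q_K)\,\bfn_\edge$ is immaterial.

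The gap is in your second half, and it is not a bookkeeping difficulty: the inequality you plan to prove there is false. Take $\bfv\in\xdisc^2$ with $\bfv_K=(0,(-1)^j)$ on the bottom-row cells ($j$ the column index) and $\bfv_K=0$ elsewhere. Every bottom external edge contributes $+1$ to the boundary sum and the two lateral corner edges contribute $0$, so the sum equals $1/h$, while a direct count gives $\normsq{\bfv}{\disc}=7/h$ (squared jumps of $4$ on the $1/h-1$ horizontal internal edges of the bottom row, of $1$ on the $1/h$ vertical edges above it, plus the boundary penalty $2/h+4$). Hence the boundary sum equals $h^{-1/2}\norm{\bfv}{\disc}/\sqrt{7}$ and admits no bound by $c\norm{\bfv}{\disc}$ uniform in $h$. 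Your tangential summation by parts cannot repair this: it rewrites the alternating sum along one side as roughly $1/(2h)$ consecutive differences, and the Cauchy--Schwarz step needed to compare these with the jump part of $\normsq{\bfv}{\disc}$ reintroduces the factor $h^{-1/2}$ (for the $\bfv$ above each difference equals $2$, so nothing is gained). In fact this same $\bfv$ shows the lemma cannot hold with the printed rate: its left-hand side equals $1$ while $h\norm{\bfv}{\disc}=\sqrt{7h}\to0$. The sharp rate is $h^{1/2}$, and that is what the paper's proof actually establishes: it applies Cauchy--Schwarz directly to the boundary sum, $\int_\Omega \gradi_\disc\, p_{\rm cb}\cdot\bfv\dx\le \bigl[\sum h^2p_K^2\bigr]^{1/2}\bigl[\sum|\bfv_K|^2\bigr]^{1/2}$, where the first factor equals $(4h)^{1/2}=2h^{1/2}$ and the second is controlled by $\norm{\bfv}{\disc}$ through the boundary-penalty terms of the norm. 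So keep your reduction, finish with this one-line Cauchy--Schwarz, and state the conclusion with $h^{1/2}$ in place of $h$; the weaker exponent serves the lemma's purpose equally well, since it still shows that the discrete gradient of the checkerboard mode degenerates as $h\to0$.
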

\begin{proof}
Since, for any pair of neighbouring control volumes $K$ and $L$, $(p_{\rm cb})_K+(p_{\rm cb})_L=0 $, we have, $\forall K \in \mesh$, $h\, (\gradi_\disc \, p_{\rm cb})_K= \sum_{\edge \in \edges(K)\cap \edgesext} p_K \,\bfn_\edge$.
Thus, $\forall \bfv \in \xdisc^2$:
\[
\int_\Omega \gradi_\disc \, p_{\rm cb} \cdot \bfv \dx =
\sum_{\begin{array}{c}\scriptstyle \edge \in \edgesext \\[-1ex] \scriptstyle (\edge \in \edges(K)) \end{array}}
h\ p_K \,\bfn_\edge \cdot \bfv_K.
\]
By the Cauchy-Schwarz inequality, we obtain:
\[
\int_\Omega \gradi_\disc \, p_{\rm cb} \cdot \bfv \dx \leq
\ \Bigl[
\sum_{\begin{array}{c}\scriptstyle \edge \in \edgesext \\[-1ex] \scriptstyle (\edge \in \edges(K)) \end{array}}
h^2 p_K^2
\Bigr]^{1/2}
\ \Bigl[ 
\sum_{\begin{array}{c}\scriptstyle \edge \in \edgesext \\[-1ex] \scriptstyle (\edge \in \edges(K)) \end{array}}
|\bfv_K|^2
\Bigr]^{1/2},
\]
which concludes the proof since, in the first term, $p_K^2=1$ and so this term is bounded by $4\,h$ and the second one is controlled by $\norm{\bfv}{\disc}$.
\end{proof}

% ------------------------------------------------------------------------------------------------------------------------------

\subsection{A first stabilization}

The basic idea governing the construction of the first stabilized scheme proposed here is to take benefit of the following partial stability result for the discrete gradient.

\begin{lemma}
There exists two positive real numbers $c_1$ and $c_2$ independent of $h$ such that, $\forall q \in \bar \xdisc$, one can find $\bfv \in \xdisc^2$ satisfying:
\[
\norm{\bfv}{\disc}=1 \mbox{ and }
\int_\Omega \gradi_\disc q \cdot \bfv \dx \geq
c_1 \norm{q}{\xLtwo(\Omega)} - c_2 \, h \snorm{q}{\disc}.
\]
\label{stab_grad}\end{lemma}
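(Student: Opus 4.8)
The plan is to reproduce, at the discrete level, the continuous stability argument based on \eqref{necas}: given $q\in\bar\xdisc$, regard it as a zero-mean $\xLtwo$ function, build a continuous field whose divergence is $q$, project it onto $\xdisc^2$, and test $\gradi_\disc q$ against (minus) that projection, turning the pairing into $\int_\Omega q\,\dive_\disc(\cdot)$ through the discrete duality (ii) already established. Since property (iii) fails discretely, the argument cannot close exactly; the defect will be shown to be controlled precisely by $h\,\snorm{q}{\disc}$, which is the origin of the second term in the estimate.

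Concretely, I would first apply \eqref{necas} to $q$ to obtain $\tilde\bfv\in\xHone_0(\Omega)^2$ with $\dive\tilde\bfv=q$ a.e. and $\norm{\tilde\bfv}{\xHone(\Omega)^2}\le c\,\norm{q}{\xLtwo(\Omega)}$. Let $\bfw=\proj\tilde\bfv\in\xdisc^2$ be the mean-value (that is, $\xLtwo$) projection of $\tilde\bfv$, and set $\bfv_\disc=-\bfw$. The standard discrete $\xHone$-stability of this projection for $\xHone_0$ fields gives $\norm{\bfv_\disc}{\disc}\le C\,\norm{\tilde\bfv}{\xHone(\Omega)^2}\le Cc\,\norm{q}{\xLtwo(\Omega)}$, the boundary penalty terms of $\normsq{\cdot}{\disc}$ being absorbed thanks to $\tilde\bfv=0$ on $\partial\Omega$.

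Next I would derive two edge-sum representations. By the discrete duality, $\int_\Omega\gradi_\disc q\cdot\bfv_\disc=\int_\Omega q\,\dive_\disc\bfw$, and reordering exactly as in the transposition computation gives $\int_\Omega q\,\dive_\disc\bfw=\sum_{\edge=K|L}(q_K-q_L)\,h\,\tfrac{\bfw_K+\bfw_L}2\cdot\bfn_\edge$ (a sum over internal edges only, matching \eqref{disc div}). On the other hand, integrating $\dive\tilde\bfv=q$ cell by cell and reordering the fluxes — the external ones vanishing since $\tilde\bfv\in\xHone_0$, consistently with the fact that $\dive_\disc$ ignores them — yields $\normsq{q}{\xLtwo(\Omega)}=\sum_{\edge=K|L}(q_K-q_L)\int_\edge\tilde\bfv\cdot\bfn_\edge\dedge$. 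Subtracting, $\int_\Omega\gradi_\disc q\cdot\bfv_\disc=\normsq{q}{\xLtwo(\Omega)}-E$ with $E=\sum_{\edge=K|L}(q_K-q_L)\,R_\edge$, where $R_\edge=\int_\edge\tilde\bfv\cdot\bfn_\edge\dedge-h\,\tfrac{\bfw_K+\bfw_L}2\cdot\bfn_\edge$ is the per-edge defect between the exact flux and its two-point approximation.

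The main obstacle is the bound $\bigl(\sum_\edge R_\edge^2\bigr)^{1/2}\le C\,h\,\norm{\tilde\bfv}{\xHone(\Omega)^2}$, delicate because $\tilde\bfv$ is only $\xHone$, with no second-order regularity. I would get it by a Bramble--Hilbert/scaling argument: on the reference two-cell patch the functional $\tilde\bfv\mapsto R_\edge$ is bounded on $\xHone$ (trace theorem for the flux, $\xLtwo$-boundedness of the cell means) and vanishes on constant fields, hence is controlled by the $\xHone$ seminorm of the patch; rescaling to size $h$ produces exactly one extra power of $h$, and summing over edges with bounded overlap gives the claim. Cauchy--Schwarz then yields $|E|\le\snorm{q}{\disc}\bigl(\sum_\edge R_\edge^2\bigr)^{1/2}\le Cc\,h\,\snorm{q}{\disc}\,\norm{q}{\xLtwo(\Omega)}$, so $\int_\Omega\gradi_\disc q\cdot\bfv_\disc\ge\norm{q}{\xLtwo(\Omega)}\bigl(\norm{q}{\xLtwo(\Omega)}-Cc\,h\,\snorm{q}{\disc}\bigr)$. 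Finally I would normalize: if $\norm{q}{\xLtwo(\Omega)}-Cc\,h\,\snorm{q}{\disc}\le0$ the asserted right-hand side is nonpositive and the claim holds for any unit $\bfv$ making the odd pairing nonnegative; otherwise the quantity above is positive, so $\bfv_\disc\neq0$, and taking $\bfv=\bfv_\disc/\norm{\bfv_\disc}{\disc}$ with $\norm{\bfv_\disc}{\disc}\le Cc\,\norm{q}{\xLtwo(\Omega)}$ gives $\int_\Omega\gradi_\disc q\cdot\bfv\ge\tfrac1{Cc}\norm{q}{\xLtwo(\Omega)}-h\,\snorm{q}{\disc}$, i.e.\ the statement with $c_1=1/(Cc)$ and $c_2=1$.
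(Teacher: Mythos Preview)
Your proof is correct and follows essentially the same approach as the paper's: both apply \eqref{necas} to $q$, interpolate the resulting $\tilde\bfv$ by cell means, exploit the discrete $\gradi$/$\dive$ duality to reduce the pairing to $\int_\Omega q\,\dive_\disc\bfw$, and bound the defect between this and $\normsq{q}{\xLtwo(\Omega)}$ --- your $R_\edge$ is exactly $-h\,\delta\bfv_\edge\cdot\bfn_\edge$ in the paper's notation --- by Cauchy--Schwarz together with the local approximation estimate $|\delta\bfv_\edge|\le c\,\norm{\tilde\bfv}{\xHone(K\cup L)^2}$, before normalizing. Your Bramble--Hilbert justification of that local estimate, your explicit sign handling via $\bfv_\disc=-\bfw$, and the case split at the normalization step are details the paper leaves implicit, but the structure of the argument is the same.
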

\begin{proof}
Let $q$ be a function of $\bar \xdisc$.
The idea of this proof is rather natural: let $\tilde \bfv \in \xHone_0(\Omega)^2$ be a function such that \eqref{necas} holds, and let us choose an interpolate of $\tilde \bfv$, say $\bfv \in \xdisc^2$, as test function.
For $\edge \in \edgesint$, $\edge=K|L$, we denote by $\bfv_\edge$ the mean value of $\tilde \bfv$ over $\edge$ and define $\delta \bfv_\edge=(\bfv_K+\bfv_L)/2-\bfv_\edge$.
We suppose that the interpolation operator is stable, in the sense that $\norm{\bfv}{\disc} \leq c \norm{\tilde \bfv}{\xHone(\Omega)^2}$ with $c$ independent of $h$, and is such that $h^2 |\delta \bfv_\edge|^2 \leq c\,h^2 \normsq{\tilde \bfv}{\xHone(K \cup L)^2}$.
Such an interpolation operator is given by instance by simply taking for $\bfv_K$ the mean value of $\tilde \bfv$ over $K$ \cite{eym-08-conv}.
Thanks to the choice of $\tilde \bfv$ and $\delta \bfv_\edge$, we have:
\[
\int_\Omega q\ \dive_\disc \bfv \dx= \normsq{q}{\xLtwo(\Omega)}
+ \sum_{\begin{array}{c}\scriptstyle \edge \in \edgesint \\[-1ex] \scriptstyle (\edge=K|L) \end{array}} h\,(q_K-q_L) \,\delta \bfv_\edge \cdot \bfn_\edge.
\]
By the Cauchy-Schwarz inequality, we get for the last term $T$:
\[
T \leq
\Bigl[
\sum_{\begin{array}{c}\scriptstyle \edge \in \edgesint \\[-1ex] \scriptstyle (\edge=K|L) \end{array}}
(q_K -q_L)^2
\Bigr]^{1/2}
\ \Bigl[
\sum_{\begin{array}{c}\scriptstyle \edge \in \edgesint \\[-1ex] \scriptstyle (\edge=K|L) \end{array}}
h^2 \, |\delta \bfv_\edge|^2
\Bigr]^{1/2}.
\]
The first term is exactly $\snorm{q}{\disc}$ and the second one, thanks to the approximation property of the interpolation operator, is controlled by $h \norm{\tilde \bfv}{\xHone(\Omega)^2}$, itself bounded by $h \norm{q}{\xLtwo(\Omega)}$ thanks to \eqref{necas}.
To conclude the proof, it only remains to normalize $\bfv$ (\ie\ to use $\bfv / \norm{\bfv}{\disc}$ instead of $\bfv$) and invoke the stability of the interpolation operator.
\end{proof}

\medskip
This suggests for a stabilized scheme to search for $(\bfu,p) \in \xdisc^2 \times \bar \xdisc$ such that, $\forall K \in \mesh$:
\begin{subequations}
\begin{align} & \displaystyle
(-\Delta_\disc \bfu)_K + (\gradi_\disc p)_K = \bff_K,
\label{BP_qdm} \\[1ex] & \displaystyle
(\dive_\disc \bfu)_K + \lambda\, h^2 \, (-\Delta_S p)_K=0,
\label{BP_mass} \end{align}\label{BP_scheme}\end{subequations}
with $\lambda>0$ and $\displaystyle (-\Delta_S p)_K=\frac 1 {h^2} \sum_{\edge=K|L} (p_K -p_L)$.

The stabilization term introduced in the mass balance may be seen as a finite volume analogue of the classical so-called Brezzi-Pitk\"aranta stabilization \cite{bre-84-sta} usual in the finite element context.

\begin{theorem}
The scheme \eqref{BP_scheme} admits a unique solution and is stable in natural energy norms, \ie\ there exists a real number $c$ independent of $h$ such that the solution $(\bfu,p)$ of \eqref{BP_scheme} satisfies:
\[
\norm{\bfu}{\disc}+\norm{p}{\xLtwo(\Omega)} \leq c \norm{\bff}{\xLtwo(\Omega)^2}.
\]
\end{theorem}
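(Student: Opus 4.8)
The plan is to run the discrete analogue of the two-step continuous argument recalled in the introduction, using that the scheme verifies the discrete versions of $(i)$ and $(ii)$, and using the stabilization precisely to plug the hole left by the failure of $(iii)$ (Lemma \ref{stab_grad}). Since \eqref{BP_scheme} is a square linear system (the momentum equations \eqref{BP_qdm} furnish $2|\mesh|$ relations, while the $|\mesh|$ mass equations \eqref{BP_mass} sum to $0=0$ by antisymmetry of both $\dive_\disc$ and $-\Delta_S$, hence yield $|\mesh|-1$ independent relations, matching $\dim\bar\xdisc$), I would first reduce existence and uniqueness to the a priori estimate: once the bound is established, taking $\bff=0$ forces $\norm{\bfu}{\disc}=0$ and $\norm{p}{\xLtwo(\Omega)}=0$, hence $\bfu=0$ and $p=0$, so the kernel is trivial and the system is invertible.

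For the estimate, I would take the $\xLtwo(\Omega)$ inner product of \eqref{BP_qdm} with $\bfu$ and of \eqref{BP_mass} with $p$. The first gives, by property $(i)$,
\[
\normsq{\bfu}{\disc} + \int_\Omega \gradi_\disc p \cdot \bfu \dx = \int_\Omega \bff \cdot \bfu \dx.
\]
For the second, note that reordering the sum over internal edges yields $\int_\Omega (-\Delta_S p)\, p \dx = \snormsq{p}{\disc}$, so testing \eqref{BP_mass} with $p$ gives $\int_\Omega p\,\dive_\disc \bfu \dx + \lambda\,h^2 \snormsq{p}{\disc}=0$. Adding the two and invoking the duality $(ii)$, namely $\int_\Omega \gradi_\disc p \cdot \bfu \dx = -\int_\Omega p\,\dive_\disc \bfu \dx$, the cross terms cancel and I obtain the clean energy identity
\[
\normsq{\bfu}{\disc} + \lambda\,h^2 \snormsq{p}{\disc} = \int_\Omega \bff \cdot \bfu \dx.
\]
Bounding the right-hand side by Cauchy--Schwarz, the discrete Poincar\'e inequality, and Young's inequality then yields both $\norm{\bfu}{\disc}\leq c\norm{\bff}{\xLtwo(\Omega)^2}$ and $h\,\snorm{p}{\disc}\leq c\norm{\bff}{\xLtwo(\Omega)^2}$, with $c$ depending on $\lambda$ but not on $h$.

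The crux is to upgrade this into a genuine $\xLtwo$ control of $p$, and this is where the stabilization pays off. I would apply Lemma \ref{stab_grad} to $p\in\bar\xdisc$, obtaining $\bfv\in\xdisc^2$ with $\norm{\bfv}{\disc}=1$ and $\int_\Omega \gradi_\disc p \cdot \bfv \dx \geq c_1 \norm{p}{\xLtwo(\Omega)} - c_2\,h\,\snorm{p}{\disc}$. Testing \eqref{BP_qdm} against this $\bfv$ gives $\int_\Omega \gradi_\disc p \cdot \bfv \dx = \int_\Omega \bff \cdot \bfv \dx - (\bfu,\bfv)_\disc$, where I use that $\int_\Omega -\Delta_\disc \bfu \cdot \bfv \dx = (\bfu,\bfv)_\disc$ (the same reordering that proves $(i)$). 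By Cauchy--Schwarz, the discrete Poincar\'e inequality, and $\norm{\bfv}{\disc}=1$, the right-hand side is bounded by $c\norm{\bff}{\xLtwo(\Omega)^2}+\norm{\bfu}{\disc}\leq c\norm{\bff}{\xLtwo(\Omega)^2}$. Combining with the lower bound from Lemma \ref{stab_grad} gives $c_1\norm{p}{\xLtwo(\Omega)} \leq c\norm{\bff}{\xLtwo(\Omega)^2} + c_2\,h\,\snorm{p}{\disc}$. The main obstacle, the spurious defect $c_2\,h\,\snorm{p}{\disc}$ in the inf-sup-type estimate, is \emph{exactly} the quantity already controlled by the stabilization in the energy identity; absorbing it via $h\,\snorm{p}{\disc}\leq c\norm{\bff}{\xLtwo(\Omega)^2}$ yields $\norm{p}{\xLtwo(\Omega)}\leq c\norm{\bff}{\xLtwo(\Omega)^2}$, and adding the two bounds concludes the proof. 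This matching of the two $h\,\snorm{p}{\disc}$ terms is the heart of the argument and the reason the stabilization is designed as it is.
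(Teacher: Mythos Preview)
Your proof is correct and follows essentially the same two-step strategy as the paper: first derive the energy identity $\normsq{\bfu}{\disc}+\lambda\,h^2\snormsq{p}{\disc}=\int_\Omega\bff\cdot\bfu\dx$ by testing the momentum and mass equations with $\bfu$ and $p$ and using the $\gradi_\disc/\dive_\disc$ duality, then test the momentum equation with the $\bfv$ supplied by Lemma~\ref{stab_grad} to convert the control of $h\snorm{p}{\disc}$ into a bound on $\norm{p}{\xLtwo(\Omega)}$. Your write-up is in fact a bit more complete than the paper's, since you make explicit the square-system/trivial-kernel argument for existence and uniqueness and spell out the cancellation of the cross terms.
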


\begin{proof}
Multiplying \eqref{BP_qdm} by $h^2\,\bfu_K$ and \eqref{BP_mass} by $h^2 \, p_K$ and summing over $K \in \mesh$, we now obtain, thanks to the duality of the discrete gradient and divergence operators:
\[
\normsq{\bfu}{\disc} + \lambda\, h^2 \snormsq{p}{\disc} \leq c.
\]
We now choose $\bfv$ to satisfy Lemma \ref{stab_grad} with $q=p$.
Multiplying \eqref{BP_qdm} by $ h^2 \, \bfv_K$ and summing over $K \in \mesh$, we get:
\[
\norm{p}{\xLtwo(\Omega)} \leq c \Bigl[ h \snorm{p}{\disc} + (\bfu,\bfv)_\disc + \Bigl| \int_\Omega \bff \cdot \bfv \dx \Bigr| \Bigr],
\]
which yields a control on $\norm{p}{\xLtwo(\Omega)}$ and concludes the proof.
\end{proof}

% ---------------------------------------------------------------------------------------------------------------------------------------------

\subsection{A second stabilization and an {\em inf-sup} stability result}

Let us now suppose that the integer number $1/h$ is even.
In this case, the mesh may be partitionned in square $2 \times 2$ patches of control volumes, which are called hereafter "clusters".
The set of internal edges of the mesh $\edgesint$ similarly decomposes into two subsets, $\edgesint=\edgesec \cup \edgesic$, the first one ($\edgesec$) containing the edges separating two control volumes of two different clusters, the second one ($\edgesic$) containing the edges separating two control volumes of a same cluster.
For $q\in \xdisc$,$\snorm{q}{\disc}$ can accordingly be split in two parts, $\snormsq{q}{\disc}=\snormsq{q}{\ec}+\snormsq{q}{\ic}$ with:
\begin{equation}
\snormsq{q}{\ec}=\sum_{\begin{array}{c}\scriptstyle \edge \in \edgesec \\[-0.5ex] \scriptstyle (\edge=K|L) \end{array}} (q_K-q_L)^2
\quad \mbox{and}\quad
\snormsq{q}{\ic}=\sum_{\begin{array}{c}\scriptstyle \edge \in \edgesic \\[-0.5ex] \scriptstyle (\edge=K|L) \end{array}} (q_K-q_L)^2.
\end{equation}
We have the following weak stability result.
\begin{lemma}
There exists a positive real number $c$ independent of $h$ such that, $\forall q \in \xdisc$, one can find $\bfv \in \xdisc^2$ satisfying:
\[
\norm{\bfv}{\disc}=1 \mbox{ and }
\int_\Omega \gradi_\disc q \cdot \bfv \dx \geq c\, h\, \bigl[ \snorm{q}{\ec} - \snorm{q}{\ic} \bigr].
\]
\label{stab_grad_2}\end{lemma}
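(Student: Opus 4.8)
The plan is to exhibit, for the given $q$, an explicit test function $\bfv$ built from the \emph{inter-cluster} pressure jumps, and to evaluate the pairing $\int_\Omega \gradi_\disc q \cdot \bfv \dx$ directly. As in the derivation of the duality property, I would first reorder the pairing over the internal edges,
\[
\int_\Omega \gradi_\disc q \cdot \bfv \dx = \sum_{\edge=K|L\in\edgesint} \frac h2\,(q_L-q_K)\,\bfn_\edge\cdot(\bfv_K+\bfv_L),
\]
the external edges dropping out exactly as before through $\sum_{\edge\in\edges(K)}h\,\bfn_\edge=0$. For each control volume $K$ I would introduce its inter-cluster and intra-cluster discrete partial gradients $\boldsymbol{g}_K=\tfrac12\sum_{\edge=K|M\in\edgesec}(q_M-q_K)\bfn_\edge$ and $\boldsymbol{h}_K=\tfrac12\sum_{\edge=K|M\in\edgesic}(q_M-q_K)\bfn_\edge$, so that $h^2(\gradi_\disc q)_K=h(\boldsymbol{g}_K+\boldsymbol{h}_K)$, and I would choose the test function $\bfv_K=2\,\boldsymbol{g}_K$.

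With this choice the pairing becomes $2h\sum_{K}\boldsymbol{g}_K\cdot(\boldsymbol{g}_K+\boldsymbol{h}_K)=2h\sum_K|\boldsymbol{g}_K|^2+2h\sum_K\boldsymbol{g}_K\cdot\boldsymbol{h}_K$. The crucial geometric observation is that, in a $2\times2$ cluster, the two inter-cluster (or external) edges of any cell are mutually orthogonal, and likewise its two intra-cluster edges; hence the cross terms in $|\boldsymbol{g}_K|^2$ and $|\boldsymbol{h}_K|^2$ vanish and, counting each internal edge at its two endpoints, $\sum_K|\boldsymbol{g}_K|^2=\tfrac12\snormsq q\ec$ and $\sum_K|\boldsymbol{h}_K|^2=\tfrac12\snormsq q\ic$. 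Applying the Cauchy-Schwarz inequality to the coupling term then yields
\[
\int_\Omega \gradi_\disc q \cdot \bfv \dx \geq h\,\snormsq q\ec - h\,\snorm q\ec\snorm q\ic = h\,\snorm q\ec\bigl(\snorm q\ec-\snorm q\ic\bigr).
\]
In parallel I would bound $\norm\bfv\disc\le C\,\snorm q\ec$ with $C$ independent of $h$: since each $\bfv_K$ is a fixed combination of the finitely many inter-cluster jumps incident to $K$, the edge differences $\bfv_K-\bfv_L$ and the boundary contributions to the norm are all controlled by $\snormsq q\ec$.

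It then remains to normalize and to distinguish two regimes. When $\snorm q\ec\ge\snorm q\ic$ the right-hand side above is nonnegative, so dividing by $\norm\bfv\disc\le C\snorm q\ec$ gives $\int_\Omega \gradi_\disc q \cdot (\bfv/\norm\bfv\disc) \dx \geq (h/C)(\snorm q\ec-\snorm q\ic)$, which is the claim with $c=1/C$. When $\snorm q\ec<\snorm q\ic$ the target $c\,h(\snorm q\ec-\snorm q\ic)$ is nonpositive, and it suffices to take any unit test function for which the pairing is nonnegative (for instance $\pm\bfv/\norm\bfv\disc$, or the analogous field $2\,\boldsymbol{h}_K$ built from intra-cluster jumps). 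The main obstacle is concentrated in the evaluation step: one must verify that the orthogonality of the inter- and intra-cluster edges at each cell makes the two seminorms appear with exactly matching constants, so that the Cauchy-Schwarz coupling is controlled with constant one — this is precisely what lets the two regimes separate cleanly and prevents a spurious factor in front of $\snorm q\ic$. The treatment of boundary clusters, where some edges are external and hence excluded from $\boldsymbol{g}_K$, is routine and does not affect the estimates.
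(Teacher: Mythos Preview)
Your proof is correct and follows essentially the same approach as the paper: you build the identical test function $\bfv_K$ from the inter-cluster jumps (your $2\boldsymbol{g}_K$ is exactly the paper's $\bfv_K=(q_{L_\ec}-q_K,\,q_{M_\ec}-q_K)^T$ up to the sign of the normals), evaluate the pairing, bound $\norm{\bfv}{\disc}$, and normalize, treating the case $\snorm{q}{\ec}<\snorm{q}{\ic}$ as trivial. The only technical differences are that the paper applies Young's inequality cell by cell to obtain $\int_\Omega \gradi_\disc q\cdot\bfv\dx\geq \tfrac{h}{2}\bigl(\snormsq{q}{\ec}-\snormsq{q}{\ic}\bigr)$ and then bounds $\norm{\bfv}{\disc}$ by the \emph{full} seminorm $\snorm{q}{\disc}$, closing with the elementary inequality $a^2-b^2\geq(a-b)\sqrt{a^2+b^2}$; whereas you use a global Cauchy--Schwarz on the coupling term together with the sharper bound $\norm{\bfv}{\disc}\leq C\,\snorm{q}{\ec}$, which cancels the factor $\snorm{q}{\ec}$ directly and makes that final algebraic step unnecessary.
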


\begin{figure}[tb]
\newgray{graylight}{.97} \newgray{graymid}{.88}
\psset{unit=1cm}
\begin{center}
\begin{pspicture}(0,0)(8.5,4.3)
\rput[bl](0,-0.5){
%
%  control volumes and cluster
   \pspolygon*[linecolor=graylight](1,1)(3,1)(3,3)(1,3)
   \pspolygon*[linecolor=graymid](2,2)(3,2)(3,3)(2,3) \rput[bl](2.35,2.4){$K$}
   \rput[bl](1.35,2.4){$L_\ic$} \rput[bl](3.35,2.4){$L_\ec$}
   \rput[bl](2.35,1.4){$M_\ic$} \rput[bl](2.35,3.4){$M_\ec$}
%
%  clusters edges
   \psline[linecolor=black, linewidth=1pt, linestyle=solid]{-}(0.5,1)(4.5,1)
   \psline[linecolor=black, linewidth=1pt, linestyle=solid]{-}(0.5,3)(4.5,3)
   \psline[linecolor=black, linewidth=1pt, linestyle=solid]{-}(1,0.5)(1,4.5)
   \psline[linecolor=black, linewidth=1pt, linestyle=solid]{-}(3,0.5)(3,4.5)
%
%  control volumes edges
   \psline[linecolor=black, linewidth=0.3pt, linestyle=solid]{-}(0.5,2)(4.5,2)
   \psline[linecolor=black, linewidth=0.3pt, linestyle=solid]{-}(0.5,4)(4.5,4)
   \psline[linecolor=black, linewidth=0.3pt, linestyle=solid]{-}(2,0.5)(2,4.5)
   \psline[linecolor=black, linewidth=0.3pt, linestyle=solid]{-}(4,0.5)(4,4.5)
%
% legend:
   %\pspolygon*[linecolor=graylight](5.5,2)(6.,2)(6.,2.5)(5.5,2.5)
   \psline[linecolor=black, linewidth=1pt, linestyle=solid]{-}(5.5,2)(6.,2)(6.,2.5)(5.5,2.5)(5.5,2)
   \rput[bl](6.1,2.1){cluster}
   %\pspolygon*[linecolor=graymid](5.5,3)(6.,3)(6.,3.5)(5.5,3.5)
   \psline[linecolor=black, linewidth=0.3pt, linestyle=solid]{-}(5.5,3)(6.,3)(6.,3.5)(5.5,3.5)(5.5,3)
   \rput[bl](6.1,3.1){control volume}
}
\end{pspicture}
\end{center}
\caption{Clusters and control volumes arrangement and local notations.}
\label{fig:cluster}
\end{figure}
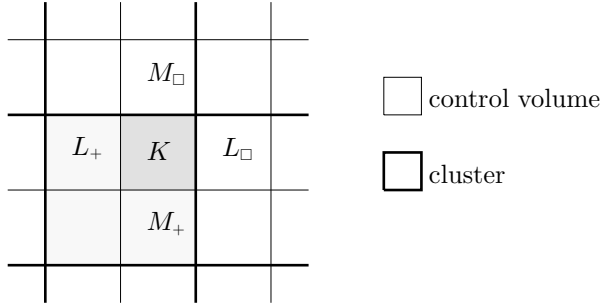

\begin{proof}
Let $q \in \xdisc$ be given, and be such that $\snorm{q}{\ec} \geq \snorm{q}{\ic}$ (otherwise, the result if the lemma is trivial).
Let $K$ be a control volume of $\mesh$ and $L_\ec$, $M_\ec$, $L_\ic$ and $M_\ic$ be its 4 adjacent control volumes, as sketched on figure \ref{fig:cluster}.
We define $\bfv_K$ by:
\[
\bfv_K=\begin{bmatrix} q_{L_\ec} - q_K \\  q_{M_\ec} - q_K \end{bmatrix}.
\]
We have:
\[
\begin{array}{l} \displaystyle
h^2\ (\gradi_\disc q )_K \cdot \bfv_K= \frac h 2\,\Bigl[ (q_{L_\ec} - q_K)^2 + (q_{M_\ec} - q_K)^2
\\ \hspace{20ex}
+ (q_{L_\ec} - q_K)(q_{L_\ic} - q_K)+ (q_{M_\ec} - q_K)(q_{M_\ic} - q_K)\Bigr],
\end{array}
\]
and thus, by Young's inequality:
\[
h^2\ (\gradi_\disc q )_K \cdot \bfv_K \geq  \frac h 4\,\Bigl[ (q_{L_\ec} - q_K)^2 + (q_{M_\ec} - q_K)^2
-(q_{L_\ic} - q_K)^2 - (q_{M_\ic} - q_K)^2 \Bigr].
\]
Summing over the control volumes, we get:
\[
\int_\Omega \gradi_\disc q \cdot \bfv \dx \geq \frac h 2 \, \bigl[ \snormsq{q}{\ec} - \snormsq{q}{\ic} \bigr].
\]
On the other hand, from the expression of $\bfv$, we deduce that $\normsq{\bfv}{\disc} \leq c_3 \snormsq{q}{\disc}$ with $c_3$ independent of $h$, and the conclusion follows by normalizing $\bfv$ and using the fact that, $\forall a,\, b \geq 0,\ a \geq b,\ a^2 - b^2 \geq (a-b) (a^2+b^2)^{1/2}$.
\end{proof}

\medskip
Let us now consider the following scheme, which consists in searching for $(\bfu,p)\in \xdisc^2 \times \bar \xdisc$ such that, $\forall K \in \mesh$:
\begin{subequations}
\begin{align} & \displaystyle
(-\Delta_\disc \bfu)_K + (\gradi_\disc p)_K = \bff_K,
\label{cluster_qdm} \\[1ex] & \displaystyle
(\dive_\disc \bfu)_K + \lambda\, h^2\, (-\Delta_{S,\ic}\, p)_K=0,
\label{cluster_mass} \end{align}\label{cluster_scheme}\end{subequations}
with $\lambda>0$ and $\displaystyle (-\Delta_{S,\ic}\, p)_K=\frac 1 {h^2} \sum_{\edge=K|L,\ \edge\in\edgesic} (p_K -p_L)$.

The stabilization involved in \eqref{cluster_scheme} may be seen as a finite volume analogue of the so-called "local jump stabilization" introduced in \cite{sil-90-sta,kec-92-ana}.

\begin{theorem}
The scheme \eqref{cluster_scheme} admits a unique solution and is stable in natural energy norms, \ie\ there exists a real number $c$ independent of $h$ such that the solution $(\bfu,p)$ of \eqref{cluster_scheme} satisfies:
\[
\norm{\bfu}{\disc}+\norm{p}{\xLtwo(\Omega)} \leq c \norm{\bff}{\xLtwo(\Omega)^2}.
\]
\end{theorem}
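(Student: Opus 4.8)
The plan is to follow the three–step pattern already used for scheme \eqref{BP_scheme}, the essential new difficulty being that the stabilization in \eqref{cluster_mass} only controls the intra-cluster pressure jumps $\snorm{p}{\ic}$, so the inter-cluster jumps $\snorm{p}{\ec}$ must be recovered by a separate argument.

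First I would establish the energy estimate. Testing \eqref{cluster_qdm} with $h^2\,\bfu_K$ and \eqref{cluster_mass} with $h^2\,p_K$, summing over $K\in\mesh$ and invoking the duality of $\gradi_\disc$ and $\dive_\disc$ (property $(ii)$), the pressure–velocity cross terms cancel while the stabilization term reorders into $\lambda\,h^2\,\snormsq{p}{\ic}$, so that
\[
\normsq{\bfu}{\disc} + \lambda\,h^2\,\snormsq{p}{\ic} = \int_\Omega \bff\cdot\bfu\dx.
\]
Bounding the right-hand side by the Cauchy–Schwarz inequality, the discrete Poincaré inequality and Young's inequality then yields $\norm{\bfu}{\disc}\le c\,\norm{\bff}{\xLtwo(\Omega)^2}$ and, crucially, $h\,\snorm{p}{\ic}\le c\,\norm{\bff}{\xLtwo(\Omega)^2}$.

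The main obstacle, and the step where Lemma \ref{stab_grad_2} enters, is to upgrade this into a bound on the full jump seminorm, \ie\ to control $h\,\snorm{p}{\ec}$. I would take $\bfv$ given by Lemma \ref{stab_grad_2} for $q=p$, with $\norm{\bfv}{\disc}=1$, test \eqref{cluster_qdm} with $h^2\,\bfv_K$, and use $\int_\Omega -\Delta_\disc\bfu\cdot\bfv\dx=(\bfu,\bfv)_\disc$ to obtain $\int_\Omega\gradi_\disc p\cdot\bfv\dx=\int_\Omega\bff\cdot\bfv\dx-(\bfu,\bfv)_\disc$, whose right-hand side is bounded by $c\,\norm{\bff}{\xLtwo(\Omega)^2}$ thanks to the velocity bound already obtained. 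Matching this with the lower estimate $c\,h\,[\snorm{p}{\ec}-\snorm{p}{\ic}]$ of Lemma \ref{stab_grad_2} gives $h\,\snorm{p}{\ec}\le h\,\snorm{p}{\ic}+c\,\norm{\bff}{\xLtwo(\Omega)^2}$, so that the first-step control of $h\,\snorm{p}{\ic}$ now delivers $h\,\snorm{p}{\disc}\le c\,\norm{\bff}{\xLtwo(\Omega)^2}$. This sign-reversal bootstrap, turning control of the intra-cluster jumps into control of all jumps, is the crux of the argument.

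Finally I would close the pressure estimate exactly as for \eqref{BP_scheme}: applying Lemma \ref{stab_grad} to $q=p\in\bar\xdisc$ produces $\bfv$ with $\norm{\bfv}{\disc}=1$; testing \eqref{cluster_qdm} with $h^2\,\bfv_K$ bounds $\int_\Omega\gradi_\disc p\cdot\bfv\dx$ from above by $c\,\norm{\bff}{\xLtwo(\Omega)^2}$, while Lemma \ref{stab_grad} bounds the same quantity from below by $c_1\norm{p}{\xLtwo(\Omega)}-c_2\,h\,\snorm{p}{\disc}$. Combined with the bound on $h\,\snorm{p}{\disc}$ just derived, this yields $\norm{p}{\xLtwo(\Omega)}\le c\,\norm{\bff}{\xLtwo(\Omega)^2}$. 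Existence and uniqueness then follow from the square, finite-dimensional structure of \eqref{cluster_scheme}: setting $\bff=0$ in the stability estimate forces $\bfu=0$ and $p=0$, so the associated linear map is injective, hence bijective.
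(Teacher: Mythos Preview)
Your proposal is correct and follows essentially the same three-step route as the paper: the energy identity to bound $\norm{\bfu}{\disc}$ and $h\snorm{p}{\ic}$, Lemma~\ref{stab_grad_2} to bootstrap to $h\snorm{p}{\ec}$ (hence $h\snorm{p}{\disc}$), and Lemma~\ref{stab_grad} to recover $\norm{p}{\xLtwo(\Omega)}$. Your write-up is in fact slightly more explicit than the paper's, in particular in spelling out the existence/uniqueness argument via injectivity of the square linear system.
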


\begin{proof}
Multiplying \eqref{cluster_qdm} by $h^2\,\bfu_K$ and \eqref{cluster_mass} by $h^2 \, p_K$ and summing over $K \in \mesh$, we obtain, thanks to the duality of the discrete gradient and divergence operators:
\[
\normsq{\bfu}{\disc} + \lambda \, h^2 \snormsq{p}{\ic} \leq c.
\]
We choose $\bfv$ to satisfy Lemma \ref{stab_grad_2} with $q=p$.
Multiplying \eqref{cluster_qdm} by $h^2\,\bfv_K$ and summing over $K \in \mesh$, we get:
\[
h \snorm{p}{\ec} \leq c \Bigl[ h \snorm{p}{\ic} + (\bfu,\bfv)_\disc + \Bigl| \int_\Omega \bff \cdot \bfv \dx \Bigr| \Bigr],
\]
which gives a control on $h \snorm{p}{\ec}$ and thus on $h \snorm{p}{\disc}$.
The conclusion now follows as for the precedent scheme.
We now choose $\bfv$ to satisfy Lemma \ref{stab_grad} with $q=p$, multiply \eqref{cluster_qdm} by $h^2\,\bfv_K$ and sum over $K \in \mesh$ to get
\[
\norm{p}{\xLtwo(\Omega)} \leq c \Bigl[ h \snormsq{p}{\disc} + (\bfu,\bfv)_\disc + \Bigl| \int_\Omega \bff \cdot \bfv \dx \Bigr| \Bigr],
\]
which yields a control on $\norm{p}{\xLtwo(\Omega)}$.
\end{proof}

Let $\bar \xdiscec \subset \xLtwo_O(\Omega)$ be the space of constant by cluster and zero mean value functions.
Combining lemmata \ref{stab_grad} and \ref{stab_grad_2}, we obtain that, $\forall q \in \bar \xdisc$, $\norm{q}{\xLtwo(\Omega)}$ may be controlled by the gradient of $q$ up to $h \snorm{p}{\ec}$.
Since this latter quantity vanishes for any function of $\bar \xdiscec$, we have the following discrete {\em inf-sup} stability result.

\smallskip
\begin{theorem}
The pair of spaces $\xdisc^2 \times \bar \xdiscec$ is inf-sup stable, in the sense that there exists a positive real number $c$ independent of $h$ such that, $\forall q \in \bar \xdiscec$, there exists $\bfv \in \xdisc^2$ satisfying:
\[
\norm{\bfv}{\disc}=1 \quad \mbox{ and } \quad
\int_\Omega \gradi_\disc q \cdot \bfv \dx \geq c \norm{q}{\xLtwo(\Omega)}.
\]
\label{inf-sup-clust}\end{theorem}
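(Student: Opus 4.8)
The plan is to combine the two weak stability estimates of Lemmata \ref{stab_grad} and \ref{stab_grad_2}, so as to cancel the negative contribution $-c_2\,h\,\snorm{q}{\disc}$ that spoils the lower bound of Lemma \ref{stab_grad}. The crucial observation is that a function $q \in \bar \xdiscec$ is constant on each cluster, so every pressure jump across an internal edge of a cluster vanishes; hence $\snorm{q}{\ic}=0$, and consequently $\snorm{q}{\disc}=\snorm{q}{\ec}$. Under this identity the two lemmata produce, for the \emph{same} quantity $\snorm{q}{\ec}$, a penalizing term of the wrong sign (Lemma \ref{stab_grad}) and a favorable term of the right sign (Lemma \ref{stab_grad_2}), which can be balanced against each other.

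Fix $q \in \bar \xdiscec$, the case $q=0$ being trivial. First I would apply Lemma \ref{stab_grad} to obtain $\bfv_1 \in \xdisc^2$ with $\norm{\bfv_1}{\disc}=1$ and, using $\snorm{q}{\disc}=\snorm{q}{\ec}$,
\[
\int_\Omega \gradi_\disc q \cdot \bfv_1 \dx \geq c_1 \norm{q}{\xLtwo(\Omega)} - c_2\,h\,\snorm{q}{\ec}.
\]
Then I would apply Lemma \ref{stab_grad_2} to obtain $\bfv_2 \in \xdisc^2$ with $\norm{\bfv_2}{\disc}=1$ and, using $\snorm{q}{\ic}=0$,
\[
\int_\Omega \gradi_\disc q \cdot \bfv_2 \dx \geq c\,h\,\snorm{q}{\ec}.
\]

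The combination step is to set $\bfw = \bfv_1 + (c_2/c)\,\bfv_2$. By bilinearity of the pairing, the two $h\,\snorm{q}{\ec}$ terms cancel exactly and
\[
\int_\Omega \gradi_\disc q \cdot \bfw \dx \geq c_1 \norm{q}{\xLtwo(\Omega)},
\]
with no residual jump term. By the triangle inequality $\norm{\bfw}{\disc} \leq 1 + c_2/c$, a bound independent of $h$. Finally I would normalize, taking $\bfv = \bfw / \norm{\bfw}{\disc}$: since the right-hand side above is nonnegative, dividing by $\norm{\bfw}{\disc} \leq 1 + c_2/c$ preserves the inequality and yields
\[
\int_\Omega \gradi_\disc q \cdot \bfv \dx \geq \frac{c_1}{1+c_2/c}\,\norm{q}{\xLtwo(\Omega)},
\]
which is the claimed inf-sup bound with $\norm{\bfv}{\disc}=1$.

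I do not expect a genuine obstacle, since the two lemmata carry all the analytic weight and the argument is a linear-combination trick. The only point demanding care is the normalization: one must check that $\bfw \neq 0$ whenever $q \neq 0$ — which holds because $c_1\norm{q}{\xLtwo(\Omega)}$ is strictly positive for a nonzero zero-mean function, forcing the pairing, and hence $\bfw$, to be nonzero — and that dividing by $\norm{\bfw}{\disc}$ degrades the constant only by a factor bounded independently of $h$, which is exactly what the uniform upper bound $\norm{\bfw}{\disc}\leq 1+c_2/c$ guarantees.
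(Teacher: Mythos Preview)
Your proposal is correct and follows exactly the approach the paper sketches in the paragraph preceding the theorem: combine Lemmata~\ref{stab_grad} and~\ref{stab_grad_2}, using that $\snorm{q}{\ic}=0$ for $q\in\bar\xdiscec$, so that the residual jump term can be absorbed. The paper gives only a one-line indication of this combination, and your linear-combination-and-normalization argument is precisely the standard way to make it rigorous.
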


\vspace{-1.5ex}
The pair $\xdisc^2 \times \bar \xdiscec$ thus could be used instead of $\xdisc^2 \times \bar \xdisc$, and the stabilization consequently dropped.
However, from our practice, the second choice is slightly more accurate; it is also easier to implement, since the velocity and the pressure are approximated by the same discrete space.
Note however that recovering a pressure constant by cluster is exactly what happens when the parameter $\lambda$ is large; the accuracy of the scheme then can be expected to be very robust with respect to the value of $\lambda$, which is the main interest of this second stabilization with respect to the first one.
Finally, in the context of transient problems, making use of the {\em inf-sup} stable alternative could be interesting to implement pressure correction schemes, in which stabilizations are difficult to insert.

\begin{remark}[A variational form for this family of schemes]{\rm
The proposed schemes \eqref{BP_scheme} and \eqref{cluster_scheme} may be recast under a "discrete variational form".
For instance, \eqref{BP_scheme} may be written as follows:
\[\begin{array}{ll}\displaystyle
(\bfu,\bfv)_\disc + \int_\Omega p \, \dive_\disc \bfv \dx = \int_\Omega \bff \cdot \bfv \dx,
\quad &
\forall \bfv \in \xdisc^2;
\\[2ex] \displaystyle
\int_\Omega q \, \dive_\disc \bfu \dx + \lambda \, h^2 \, [p,q]_\disc = 0,
&
\forall q \in \xdisc.
\end{array}\]
Indeed, the equations of \eqref{BP_scheme} may be recovered from this formulation by choosing for the test functions the characteristic functions of the control volumes.
This variational formulation is used for the extension of the schemes to more general meshes, in particular by changing the form of the discrete $\xHone$ inner product (see section \ref{sec:suschi}).
}\label{var_form}\end{remark}

% ---------------------------------------------------------------------------------------------------------------------------------------------

\section{Generalizations}

In this section, we turn to the case where $\Omega$ is a polygonal bounded domain of $\xR^2$.
Since the main arguments necessary for the generalization of the schemes described above stem for error estimates, we first address this issue; then two specific cases are treated.

% ---------------------------------------------------------------------------------------------------------------------------------------------

\subsection{Convergence issues}

The error analysis briefly presented here relies of the arguments developed in \cite{eym-00-fin} for the analysis of schemes for elliptic problems.
We consider the scheme for \eqref{pbcont_s} which consists in searching $(\bfu,p)\in \xdisc^2 \times \bar \xdisc$ such that, $\forall K \in \mesh$:
\begin{subequations}
\begin{align} & \displaystyle
(-\Delta_\disc \bfu)_K + (\gradi_\disc p)_K = \bff_K,
\label{gene_qdm} \\[1ex] & \displaystyle
(\dive_\disc \bfu)_K + (T_S)_K=0,
\label{gene_mass} \end{align}\label{gene_scheme}\end{subequations}
where $T_S$ stands for a possible stabilization term, the discrete Laplace operator $\Delta_\disc$ and divergence $\dive_\disc$ read:
\[
(-\Delta_\disc \bfu)_K = \frac 1 {|K|} \sum_{\edge\in\edges(K)} F_\edge(\bfu), \quad
(\dive_\disc \bfu)_K= \frac 1 {|K|} \sum_{\edge=K|L}  G_\edge(\bfu),
\]
and the numerical fluxes $F_\edge(\bfu)$ and $G_\edge(\bfu)$ are functions of the mesh and the value of the unknown $\bfu$ in the control volumes located "near" the edge $\edge$.

We define a set of points $(\bfx_K)_{K \in \mesh}$ such that, for any control volume $K \in \mesh$, the point $\bfx_K$ lies inside $K$.
Then let $\proj$ be the interpolation operator which associates to any function $u \in \xCzero(\Omega)$ the function $\proj u \in \xdisc$ by $\forall K \in \mesh,\ (\proj u)_K = u(\bfx_K)$.
We make the following consistency assumptions:

\medskip
\begin{tabular}{l|l}
$(H_c)$
& \begin{minipage}{0.84\textwidth}
For $\edge\in\edgesint$, the fluxes $F_\edge$ and $G_\edge$ are consistent up to the second order, in the sense that, for any affine vector-valued function polynomial $\bfvphi$:
\[
F_\edge(\proj \bfvphi)=\int_\edge \gradi \bfvphi \cdot \bfn_\edge \dedge, \quad
G_\edge(\proj \bfvphi)=\int_\edge \bfvphi \cdot \bfn_\edge \dedge.
\]
For $\edge\in\edgesext$, $F_\edge$ satisfies the same consistency relation supposing that $\bfvphi$ vanishes on $\edge$ and $G_\edge$ vanishes.
\end{minipage}
\end{tabular}

\medskip
Together with the fact that the scheme is stable in the discrete energy norms, which implies that the assumptions $(i)$ (coercivity of the diffusion term) and $(ii)$ (duality of the discrete gradient and divergence operator) hold, this consistency assumption $(H_c)$ is central for proving first order error estimates (in energy norms) for the Brezzi-Pitk\"{a}ranta stabilization \cite{eym-06-sta} and the stabilization by clusters \cite{eym-08-conv}.

% ---------------------------------------------------------------------------------------------------------------------------------

\subsection{Non-uniform structured grids}

We now examine the consequences of these consistency requirements when $\Omega$ is still $(0,1)\times(0,1)$ and the grid is still structured but no-longer uniform.
Let $K$ and $L$ be two adjacent control volumes separated by the edge $\edge$, $h_K^\perp$ (resp. $h_L^\perp$) be the length of $K$ (resp. $L$) in the direction perpendicular to $\edge$.
The natural choice for $\bfx_K$ (resp. $\bfx_L$) is the mass center of $K$ (resp. $L$), and, in this condition, the discretization for $G_\edge$ must be:
\[
G_\edge = |\edge|\ \Bigl[ \frac{h^\perp_L}{h^\perp_K +h^\perp_L}\ \bfu_K + \frac{h^\perp_K}{h^\perp_K +h^\perp_L}\ \bfu_L \Bigr] \cdot \bfn_\edge.
\]
Imposing to the discrete gradient operator to be the transposed of the divergence with respect to the $\xLtwo$ inner product, we obtain that the flux associated to the gradient of the pressure through $\edge$, let say $H_\edge$, reads:
\[
H_\edge=|\edge|\ \Bigl[ \frac{h^\perp_K}{h^\perp_K +h^\perp_L}\ p_K + \frac{h^\perp_L}{h^\perp_K +h^\perp_L}\ p_L \Bigr] \ \bfn_\edge,
\]
which is not the standard (and only a first order) interpolation.

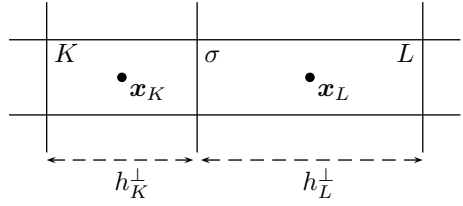
\begin{figure}[tb]
\psset{unit=1cm}
\begin{center}
\begin{pspicture}(0,0)(7,3)
   \psline[linecolor=black, linewidth=0.5pt, linestyle=solid]{-}(0.5,1)(6.5,1)
   \psline[linecolor=black, linewidth=0.5pt, linestyle=solid]{-}(0.5,2)(6.5,2)
   \psline[linecolor=black, linewidth=0.5pt, linestyle=solid]{-}(1,0.5)(1,2.5)
   \psline[linecolor=black, linewidth=0.5pt, linestyle=solid]{-}(3,0.5)(3,2.5)
   \psline[linecolor=black, linewidth=0.5pt, linestyle=solid]{-}(6,0.5)(6,2.5)
   \rput[bl](1.1,1.7){$K$} \rput[bl](5.65,1.7){$L$} \rput[bl](3.1,1.7){$\edge$}
   \psdot*(2,1.5)   \rput[bl](2.1,1.2){$\bfx_K$}
   \psdot*(4.5,1.5) \rput[bl](4.6,1.2){$\bfx_L$}
   \psline[linecolor=black, linewidth=0.5pt, linestyle=dashed]{<->}(1,0.4)(2.95,0.4) \rput[bl](1.9,-0.1){$h^\perp_K$}
   \psline[linecolor=black, linewidth=0.5pt, linestyle=dashed]{<->}(3.05,0.4)(6,0.4) \rput[bl](4.4,-0.1){$h^\perp_L$}

\end{pspicture}
\end{center}
\caption{Notations for an edge in a non-uniform structured grid.}
\label{fig:non_uniform}
\end{figure}

% -------------------------------------------------------------------------------------------------------------------------------

\subsection{General grids}\label{sec:suschi}

A scheme for general grids, including grids involving hanging nodes, is presented in \cite{eym-07-new}.
This scheme may work with the Brezzi-Pitk\"{a}ranta stabilization or with a stabilization by cluster.
For this generalization, two new ingredients, in particular, are necessary:

\smallskip
\begin{itemize}
\item The definition of a diffusion operator.
This is performed using a variational approach with a modified form for the inner product $(\cdot,\cdot)_\disc$, as mentioned in Remark \ref{var_form}.
\item A suitable definition for the clusters, which are seen as patches of elements satisfying the following general condition:
\begin{equation}
\forall K \in \mesh \mbox{ such that } \NN_K \not \subset \GG_K,\quad
\inf_{(a_L) \subset \xR} \frac
{\displaystyle \Bigl| \sum_{L\in \NN_K \setminus \GG_K} a_L\ \bfn_{K|L}\ \Bigr|^2}
{\displaystyle \sum_{L\in \NN_K \setminus \GG_K} a_L^2}
\geq c >0,
\label{cond_cluster}\end{equation}
where, $\forall K \in \mesh$, $\NN_K$ is the set of the neighbours of $K$ (\ie\ the control volumes sharing an edge with $K$) and $\GG_K$ is the cluster containing $K$.
The condition \eqref{cond_cluster} is exactly the condition which allows to control the $\norm{\cdot}{\ec}$ norm of a function of $\xdisc$ by its gradient, as in lemma \ref{stab_grad_2}.
Considering now a family of meshes, this control will be uniform if the real number $c$ does not depend on the considered mesh, and Relation \eqref{cond_cluster} thus acts as a regularity criterion for the meshes.
\end{itemize}

\end{document}